\newtheorem{theorem}{Theorem}[section]
\newtheorem{lemma}[theorem]{Lemma}
\newtheorem{proposition}[theorem]{Proposition}
\newtheorem{corollary}[theorem]{Corollary}
\theoremstyle{definition}
\theoremstyle{remark}
\newtheorem{remark}[theorem]{Remark}
\numberwithin{equation}{section}
\newtheorem{cproof}{Computer assisted proof}
\newcommand{\R}{{\mathbb R}}
\newcommand{\C}{\mathcal{C}}
\newcommand{\eqn}[2]{\begin{equation}\begin{split} #1\end{split}\label{#2}\end{equation}}
\newcommand{\eq}[1]{\begin{equation}\begin{split} #1\end{split}\notag\end{equation}}
\newcommand{\N}{\mathbb{N}}
\author{Mark Allen}
\address{Department of Mathematics, Brigham Young University, Provo,
  UT 84602}
\email{allen@mathematics.byu.edu}
\author{Blake Barker}
\address{Department of Mathematics, Brigham Young University, Provo,
  UT 84602}
\email{blake@mathematics.byu.edu}
\author{Jason Gardiner}
\address{Department of Mathematics, Brigham Young University, Provo,
  UT 84602}
\email{jasongardiner@mathematics.byu.edu}
\author{Mingyan Zhao}
\address{Department of Mathematics, Brigham Young University, Provo,
  UT 84602}
\email{zmy407737382@gmail.com}
\subjclass[2010]{35R35,35R01,35J20,65M99}
\thanks{M.~Allen was supported by Simons Foundation Collaboration Grant Award Number 637757}
\title[Free Boundary Problem]{Minimizers of a free boundary problem\\ on three-dimensional cones}
\begin{document}

\maketitle
 \begin{abstract}
  We consider a free boundary problem on three-dimensional cones depending on a parameter $c$ and study when the free boundary 
  is allowed to pass through the vertex of the cone. Combining analysis and computer-assisted proof, we show that when $c \leq 0.43$ the free boundary 
  may pass through the vertex of the cone. 
 \end{abstract}

\section{Introduction}
 We study solutions to the one-phase free boundary problem 
    \[
    \begin{aligned}
      \Delta u &=0  &\text{ in } &\{u>0\}, \\
         |\nabla u|&=1  &\text{ on } &\partial \{u>0\},
   \end{aligned}
  \]
 on right circular cones in $\R^4$. We are interested in determining when the free boundary $\partial \{u>0\}$ is allowed to pass through the vertex of the cone. 
 
 The one-phase free boundary problem has been well-studied since the seminal paper \cite{ac81}, and has many applications including two dimensional flow problems as well as heat flow problems, see \cite{cs05}. The literature is too extensive to list here; however, we do mention recent results regarding the variable coefficient situation which pertains to our problem.  
 When considering the applications on a manifold, one studies a variable coefficient problem in divergence form: 
  \begin{equation}    \label{e:variphase}
    \begin{aligned} 
      \partial_j(a^{ij}(x)u_i) &=0  &\text{ in } &\{u>0\} \\
         a^{ij}(x)u_i u_j &=1  &\text{ on } &\partial \{u>0\}.
   \end{aligned}\
  \end{equation}
 Solutions of \eqref{e:variphase} may be found inside a bounded domain $\Omega$ by minimizing the functional:
  \begin{equation}   \label{e:functional2}
   \int_{\Omega} a^{ij}(x)v_iv_j + \chi_{\{v>0\}}. 
  \end{equation}
  Since the functional is not convex, minimizers of \eqref{e:functional2} may not be unique and there exist solutions to \eqref{e:variphase} 
  which are not minimizers of \eqref{e:functional2}. 
  When the coefficients $a^{ij}(x)$ are H\"older continuous, the regularity of the free boundary $\partial \{u>0\}$ was accomplished in \cite{dfs16}. For coefficients $a^{ij}(x)$ assumed merely to be bounded, measurable, and satisfying the usual
  ellipticity conditions, regularity of the solution and its growth away from the free boundary was studied in \cite{pt16}. However, to date nothing is know regarding the regularity
  of the free boundary when the coefficients $a^{ij}(x)$ are allowed to be discontinuous. In this paper we are interested in how the free boundary interacts with 
  isolated discontinuous points of the coefficients $a^{ij}(x)$. 
  In the context of a hypersurface, these points are considered to be a topological singularity. The simplest such case is the vertex of a cone. 
  
  In this paper we improve the known results on when the free boundary may pass through the vertex of a three-dimensional right circular cone. 
  We consider minimizers of the functional 
   \begin{equation}   \label{e:cfunctional}
   J_c(w)=\int_{\C_1} |\nabla w|^2 + \chi_{\{w>0\}},
  \end{equation}
  with fixed boundary data $w=\zeta$ on $\partial C_1$ where 
  \[
  \C_1 := \{x \in \mathbb{R}^4  \mid x_4 = c\sqrt{x_1^2+x_2^2 + x_3^2} \  \text{ and } \ x_1^2+x_2^2 + x_3^2 <1 \},
  \]
   the parameter $c \geq 0$, and $\nabla_c$ is the inherited gradient on the cone. 
  In \cite{a17} the first author showed there exists $c_0>0$ such that if $c>c_0$ and $u$ is a minimizer of \eqref{e:cfunctional}, then $0 \notin \partial \{u>0\}$. The value
  $c_0$ arises as a critical case for an inequality involving Legendre functions (see Section \ref{s:critical}). The question of the free boundary passing through the cone was reduced to the study of a particular homogeneous solution $\tilde{u}_c$ (see Section \ref{s:prelim}). When $c> c_0$, the solution $\tilde{u}_c$ is not stable, and therefore
  cannot be a minimizer for the functional. However, when $c<c_0$, the solution $\tilde{u}_c$ is stable. This suggests that $\tilde{u}_c$ could be a minimizer for the functional
  for $c \leq c_0$, and therefore the free boundary can pass through the vertex of the cone. 
  
  The first author also showed in \cite{a17} that  there exists some arbitrarily small $c_1>0$ (with $c_1 < c_0$) such that if $c\leq c_1$, then there exists a minimizer $u$ to \eqref{e:cfunctional} with $0\in \partial \{u>0\}$. That $c_1>0$ is significant precisely because this result holds only for right circular cones when the dimension of the cone is greater than 2. For $2$-dimensional right circular cones, the first author and H. Chang-Lara showed in \cite{acl15} that the free boundary never passes through the vertex for any $c>0$.  
  
  The result in \cite{a17} naturally raises the question of what occurs for $c_1 < c \leq c_0$.  The techniques in \cite{a17} were purely analytic and used a compactness argument by letting $c \to 0$. The methods and proofs heavily relied on known results when the cone is flat. In this paper we first numerically calculate the value $c_0 \approx 0.5884$. Combining improved analytic techniques with computer-assisted proof, we show that for $c \leq 0.43$, 
   the free boundary does indeed pass through the vertex of the cone. This is our main result and stated in Theorem \ref{t:main}. 
  
  We also remark that there is a well known connection between solutions to the one-phase problem and minimal surfaces. 
  Theorem \ref{t:main} is a one-phase analogue of a theorem of Morgan \cite{m02} for minimal surfaces on cones, see discussion in \cite{a17}.

      \subsection{Outline and Notation} 
          The outline of this paper is as follows. In Section \ref{s:prelim} we define the notion of a solution to the free boundary problem and the corresponding functional. We also recall the 
          Legendre functions which will be important to our problem. 
           In Section \ref{s:critical} we calculate numerically the critical value $c_0$.   
           In Section \ref{s:subsolutions} we construct a continuous family of subsolutions to our free boundary problem, and in Section \ref{s:supersolution} we construct a continuous 
           family of supersolutions. Using the results from Sections \ref{s:subsolutions} and \ref{s:supersolution} we are able to prove Theorem \ref{t:main}.  
          
          Throughout the paper we will consider functions depending on radius $r$ and angle $\phi$. For geometric reasons it is often easier to consider functions of these variables. 
          However, for numerical computations it is more advantageous to use the change of variable $t=\cos \phi$.  In order to distinguish between 
          when a function is a variable of $\phi$ or $t$, we will use the convention $u(r,\cos \phi)=\tilde{u}(r,\phi)$. 
          We will use the following notation throughout the paper. 
          \begin{itemize}
           \item $\C$ is always a cone of type $\{x \in \mathbb{R}^4 \mid x_4 = c \sqrt{x_1^2+x_2^2 + x_3^2}\}$.
           \item $\C_r := \{(y_1,y_2,y_3,y_4) \in \C : \sqrt{y_1^2 +y_2^2 + y_3^2} <r\}$. 
           \item $c$ is always the constant appearing in the definition of the cone in $\C$. 
           \item $\nabla_c$ refers to the gradient on $\C$ arising from the inherited metric as explained in Section \ref{s:prelim}. 
           \item $\Delta_c$ refers to the Laplace-Beltrami operator on $\C$ as explained in \ref{s:prelim}.  
          \item We say that a function $u$ is $c$-harmonic if $\Delta_c u=0$. 
          \end{itemize}

\section{Preliminaries}  \label{s:prelim}
   In this section we recall the results and notation from \cite{a17}.  Minimizers of \eqref{e:cfunctional} are solutions to     
   \begin{equation}   \label{e:phase1}
     \begin{cases}
      u \geq 0 \\
      u\text{ is continuous in  } \C_1 \\
           \Delta_c u =0  \text{ in } \{u>0\} \\
      \partial\{u>0\} \setminus \{0\} \text{ is locally smooth} \\
         |\nabla_c u|=1  \text{ on } \partial \{u>0\}\setminus \{0\},
   \end{cases}
   \end{equation}  
  where $\nabla_c$ is the gradient on $\C$ from the inherited metric, and 
  $\Delta_c$ is the Laplace-Beltrami on $\C$.

    We consider the spherical parametrization of the cone $\C$. Using spherical coordinates we have 
  \begin{equation}  \label{e:spherical}
   y_1 = r \cos \theta \sin \phi, \qquad
    y_2 = r \sin \theta \sin \phi, \qquad
    y_3 = r \cos \phi, \qquad
    y_4 = cr.
  \end{equation}

  Under these coordinates the area form is $\sqrt{\text{det}(g)}=r^2 \sin \phi \sqrt{1+c^2}$. 
  The local coordinates $g^{ij}$ are given by 
\[ 
 \left( 
        \begin{array}{ccc}
          r^{-2} \sin^{-2} \phi  & 0       & 0 \\
          0                             & r^{-2} & 0 \\
          0                             & 0       & (1+c^2)^{-1} 
        \end{array} 
  \right),
\] 
and in these local coordinates we minimize
 \[
  \int_{B_1} \sqrt{g} g^{ij} u_i u_j + \sqrt{g} \chi_{\{u>0\}}. 
 \]
 Any minimizer will satisfy 
 \[
  \begin{cases}
    \frac{1}{\sqrt{g}} \partial_j (\sqrt{g} g^{ij} u_i) &=0  \quad\text{ in }  \{u>0\} \\
   \sqrt{g} g^{ij} u_iu_j &= \sqrt{g}   \quad\text{ on } \partial\{u>0\}\setminus \{0\},
  \end{cases}
 \]
and so a minimizer of \eqref{e:cfunctional} is a solution to \eqref{e:phase1}. We note that the first condition is written out as 
  \begin{equation}  \label{e:beltrami}
  \frac{1}{1+c^2} \left(u_{rr} + \frac{2}{r}u_r  \right) + \frac{1}{r^2}
   \left(\frac{u_{\theta \theta}}{\sin \phi} + \frac{\cos \phi}{\sin \phi} u_{\phi} + u_{\phi \phi} \right) =0
  \end{equation}
  in the set $\{u>0\}$.
   
  In this paper, we will often consider solutions which only depend on the variables $r,\phi$. In this situation we have the following formula for $|\nabla_c u|^2$
 where $\nabla_c$ is the gradient on the cone $\C$. 
 \begin{equation}   \label{e}
  |\nabla_c v|^2 = \frac{1}{1+c^2} v_r^2 + \frac{1}{r^2} v_{\phi}^2. 
 \end{equation}

 As shown in \cite{a17}, in order to classify whether the free boundary passes through the vertex, it is sufficient to determine whether or not one-homogeneous solutions are minimizers. 
   A particular candidate solution is $\tilde{u}_c(r,\theta,\phi)=\max\{r \tilde{f}_c (\phi),0\}$  with $\tilde{f}_c(\phi)$ solving
   \[
   \begin{cases}
    &\tilde{f}_c''(\phi) + \frac{\cos \phi}{\sin \phi} \tilde{f}_c'(\phi) + \frac{2}{1+c^2} \tilde{f}_c(\phi)=0 \text{ for } 0\leq \phi < \pi, \\
    &\tilde{f}_c'(\phi_c)=1,
    \end{cases}
   \]  
   where $\phi_c$ is the unique value such that $\tilde{f}_c(\phi_c)=0$. 
   In \cite{a17} it is shown that if $\C$ is a three-dimensional right circular cone, and if $u$ is a $1$-homogeneous minimizer, then up to rotation $u=\tilde{u}_c$. Furthermore, 
   if the free boundary passes through the vertex, then after performing a blow-up, one obtains that $\tilde{u}_c$ is a minimizer. Thus, the problem of determining when the free 
   boundary of a minimizer may pass through the vertex is completely reduced to determining when $\tilde{u}_c$ is a minimizer.

    If $\Delta_c r^{\alpha} \tilde{f}(\theta,\phi)=0$ and independent of $\theta$, then 
    \[
     \tilde{f}''(\phi) + \frac{\cos \phi}{\sin \phi} \tilde{f}'(\phi) + \frac{\alpha(\alpha+1)}{1+c^2} \tilde{f}(\phi)=0. 
    \]
   Under the change of variable $t=\cos \phi$ with $f(t)=\tilde{f}(\phi)$, this equation becomes 
   \begin{equation}  \label{e:legalpha}
    (1-t^2)f''(t) -2tf'(t) + \frac{\alpha(\alpha+1)}{1+c^2}f(t)=0,
   \end{equation}
   which is a Legendre equation and well-studied. Most relevant to us are the values $\alpha=1$ and $\alpha=-1/2$. For convenience we write the equation as 
   \begin{equation}   \label{e:leg1}
      (1-t^2)f''(t) -2tf'(t) + \beta f(t)=0 \text{ for } -1<t<1. 
   \end{equation}
   There exist two linearly independent solutions, and we are interested in the Legendre function of the first kind. This solution may be written as a power series 
   \[
    f(t,\beta):= \sum_{n=0}^{\infty} a_n(\beta)(1-t)^n,
   \]
    with the recursion relation
   \[
    a_{n+1}(\beta) = \frac{a_n}{2} \left(\frac{n^2+n-\beta}{(n+1)^2} \right).
   \]
   Using the estimates in Proposition \ref{p:e1} below, it is clear that for $\beta \in [-2,2]$ and $t \in (-1,3)$ the series will converge absolutely. 
   When $\beta=2/(1+c^2)$ we denote the solution as $f_c(t)$, and when $\beta=-(1/4)/(1+c^2)$ we denote the solution as $g_c(t)$.

   We have the following elementary estimate for error bounds. We will only be interested in $t \in (-1,1]$, however, similar bounds hold for $t \in (1,3)$. 
   \begin{proposition}  \label{p:e1}
    If $f_n(t,\beta):=\sum_{k=0}^n a_k(\beta)(1-t)^k$, then for $t \in (-1,1]$ and $\beta\in[-2,2]$,
    \[
     |f(t,\beta)-f_n(t,\beta)| \leq \frac{4|a_0|}{t+1}  \left(\frac{1-t}{2}\right)^{n+1}. 
    \]
    For $j$ derivatives we also have 
    \[
     |f^{(j)}(t,\beta)-f^{(j)}_n(t,\beta)| \leq 2|a_0| \left|\left(\frac{((1-t)/2)^{n+1}}{1- (1-t)/2}\right)^{(j)} \right|.
    \]
   \end{proposition}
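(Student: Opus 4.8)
The plan is to reduce the whole Proposition to a single uniform bound on the Taylor coefficients $a_n(\beta)$ and then to compare the tail of the series with a geometric series. First I would control the recursion ratio
\[
\left|\frac{a_{n+1}(\beta)}{a_n(\beta)}\right| = \frac{|n^2+n-\beta|}{2(n+1)^2}.
\]
For $\beta\in[-2,2]$ and $n\ge 1$ we have $0\le n^2+n-\beta\le n^2+n+2\le (n+1)^2$, so this ratio is at most $1/2$; for $n=0$ it equals $|\beta|/2\le 1$. Iterating these two observations gives the uniform coefficient estimate
\[
|a_n(\beta)| \leq 2|a_0|\,2^{-n} \qquad (n \geq 0,\ \beta \in [-2,2]),
\]
which is the single ingredient driving both parts of the statement.

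For the first inequality I would bound the tail termwise. Setting $x:=(1-t)/2\in[0,1)$ for $t\in(-1,1]$, the coefficient estimate yields
\[
|f(t,\beta)-f_n(t,\beta)| \leq \sum_{k=n+1}^{\infty} |a_k|\,(1-t)^k \leq 2|a_0|\sum_{k=n+1}^{\infty} x^k = 2|a_0|\,\frac{x^{n+1}}{1-x},
\]
and the asserted form is immediate from $1-x=(1+t)/2$.

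For the derivative estimate the key observation is a sign coherence. Since $t\le 1$, each derivative satisfies $[(1-t)^k]^{(j)}=(-1)^j\frac{k!}{(k-j)!}(1-t)^{k-j}$ for $k\ge j$ (and vanishes otherwise), so every nonzero term carries the same sign $(-1)^j$ while $(1-t)^{k-j}\ge 0$. I would introduce the comparison function $G(t):=2|a_0|\sum_{k=n+1}^{\infty}x^k=2|a_0|\,\frac{x^{n+1}}{1-x}$. On any compact subinterval $[a,1]\subset(-1,1]$ the variable $x$ stays bounded away from $1$, so the series and all of its termwise derivatives converge uniformly and differentiation may be carried out term by term. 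Combining the coefficient estimate with the sign coherence then gives
\[
|f^{(j)}(t,\beta)-f^{(j)}_n(t,\beta)| \leq \sum_{k=n+1}^{\infty}|a_k|\,\big|[(1-t)^k]^{(j)}\big| \leq 2|a_0|\sum_{k=n+1}^{\infty}2^{-k}\big|[(1-t)^k]^{(j)}\big| = |G^{(j)}(t)|,
\]
where the last equality pulls the common factor $(-1)^j$ out of the sum. Since $G(t)=2|a_0|\,\frac{((1-t)/2)^{n+1}}{1-(1-t)/2}$, the right-hand side equals $2|a_0|\left|\left(\frac{((1-t)/2)^{n+1}}{1-(1-t)/2}\right)^{(j)}\right|$, as claimed.

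I expect the only delicate point to be the justification of termwise differentiation and of the identity $|G^{(j)}(t)|=2|a_0|\sum_k 2^{-k}\big|[(1-t)^k]^{(j)}\big|$; both rest on the uniform convergence on compact subsets of $(-1,1]$ furnished by the coefficient bound, together with the fact that for $t\le 1$ no cancellation occurs among the derivative terms, which is exactly what lets the majorant be recognized as a genuine $j$-th derivative of a closed-form geometric sum. The case $t\in(1,3)$ alluded to in the statement is handled by the same comparison, now bounding $|x|^k$ in place of $x^k$.
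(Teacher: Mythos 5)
Your proof is correct and follows essentially the same route as the paper: the uniform coefficient bound $|a_n(\beta)|\leq 2|a_0|2^{-n}$, comparison of the tail with the geometric series in $x=(1-t)/2$, and identification of the derivative majorant with the $j$-th derivative of the closed-form geometric tail. You are in fact somewhat more careful than the paper, which merely asserts the coefficient bound and leaves the sign-coherence/termwise-differentiation justification implicit.
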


   \begin{proof}
    For $\beta \in [-2,2]$ we have that $|a_n| \leq a_0 (1/2)^{n-1}$. Thus, 
    \[
    \begin{aligned}
      |f(t,\beta)-f_n(t,\beta)| &\leq \left| \sum_{k=n+1}^{\infty} |a_k| (1-t)^k\right| \\
                                         &\leq 2|a_0|\sum_{k=n+1}^{\infty}  |(1-t)/2|^k \\
                                         &= 2|a_0|\frac{|(1-t)/2|^{n+1}}{1- |(1-t)/2|} \\
                                         &= 4|a_0|  \frac{|(1-t)/2|^{n+1}}{t+1}. 
      \end{aligned}
    \]
     We also have 
    \[
    \begin{aligned}
      |f'(t,\beta)-f'_n(t,\beta)| &= \left|\sum_{k=n+1}^{\infty}  k a_k (1-t)^{k-1}\right| \\
                                           &\leq 2|a_0|\left|\sum_{k=n+1}^{\infty}  k((1-t)/2)^{k-1}\right|  \\
                                           &= 2|a_0| \left|\left(\frac{((1-t)/2)^{n+1}}{1- (1-t)/2}\right)' \right|\\
                                           &= 2^{1-n}|a_0|\left|\left( \frac{(1-t)^{n+1}}{1+t}\right)'\right|\\
                                           &= 2^{1-n}|a_0| \frac{(1-t)^n(2+n(1+t))}{(1+t)^2}.
      \end{aligned}
    \]
    In a similar manner we have 
    \[
        \begin{aligned}
     |f^{(j)}(t,\beta)-f^{j}_n(t,\beta)| &\leq 2|a_0| \left|\left(\frac{|(1-t)/2|^{n+1}}{1- (1-t)/2}\right)^{(j)} \right|\\
     &=  2^{1-n}|a_0|\left|\left( \frac{(1-t)^{n+1}}{1+t}\right)^{(j)}\right|.
           \end{aligned}
    \]
   \end{proof}
   
  \subsection{Computer assisted proof} We will use computer assisted proof, also referred to as rigorous computation, to verify several results of the paper.  Researchers have used rigorous computation to solve several conjectures, including Mitchell Feigenbaum's universality conjecture in non-linear dynamics, the Kepler conjecture, and the 14th of Smale's problems. We say that a computation is rigorous if all possible errors are accounted for and tracked so that the resulting answer is an interval that is guaranteed to contain the actual desired quantity. In order to bound machine truncation error, we use the interval arithmetic package INTLAB \cite{R}.
        
  \subsection{Analytic interpolation} At times, we will interpolate the functions $f$ and $g$, with interpolation error bounds, as part of our rigorous computation to verify results. We may obtain a relatively low degree polynomial approximation of an analytic function of interest when interpolating, with error bounds on the order of machine epsilon. Indeed, the error decays exponentially with respect to the number of interpolation nodes. Further, one only need evaluate the modulus of the analytic function on an ellipse to obtain the bound. More than just speeding up evaluation of the function, interpolation reduces the so called wrapping effect that can occur when using interval arithmetic, which refers to the size of intervals growing unnecessarily large to the point of not being useful.  The details of how we interpolate are as given in \cite{Barker,Barker2014}, and the theory regarding error bounds are given in \cite{RW,Ta}.

   \section{The critical value}  \label{s:critical}
   Let $t_c$ be the unique value such that $f_c(t_c)=0$, and let $c_0$ be the unique value such that 
   \[
   \frac{|t_{c_0}|}{1-t_{c_0}^2} = \frac{|g'(t_{c_0})|}{g(t_{c_0})}. 
   \]
   As shown in \cite{a17}, by applying a second variational formula, $\tilde{u}_c$ is not stable and hence not a minimizer if 
    \[
     \frac{|t_c|}{1-t_c^2} > \frac{|g'(t_c)|}{g(t_c)}, 
    \]
    which is the case whenever $c>c_0$. If $c\leq c_0$, the above inequality fails. This implies that $\tilde{u}_c$ is a stable solution and suggests that $\tilde{u}_c$ is a minimizer whenever $0\leq c \leq c_0$. In this paper we use rigorous computation to show that there exists a $0< \tilde c_0< c_0$ such that $\tilde{u}_c$ is a minimizer whenever $0\leq c\leq \tilde c_0$. We numerically approximate $c_0 \approx 0.5884=: \tilde c_0$.
    
    To calculate this approximation of $c_0$, we interpolated the function $b(c) = t_c$ and then used a root finder to solve
    \[
     \frac{|b(c)|}{1-b(c)^2} - \frac{|g'(b(c))|}{g(b(c))} = 0 
    \]
    for $c$.


\section{A continuous family of subsolutions}  \label{s:subsolutions}

We briefly review how to show that $\tilde{u}_c$ is a minimizer. In order to do so we recall the definitions of a subsolution and supersolution to our free boundary problem. 
We say that $v$ is a subsolution to the free boundary problem if 
 \begin{equation}   \label{e:subs}
     \begin{cases}
      v \geq 0 \\
      v\text{ is continuous in  } \Omega \\
           \Delta_c v \geq 0  \text{ in } \{v>0\} \\
         \lim_{r \to 0} \sup_{B_r(x)} |\nabla_c v| > 1  \text{ for any } x \in \partial \{v>0\}\setminus \{0\}.
   \end{cases}
   \end{equation}  
   Similarly, $v$ is a supersolution to the free boundary if 
    \begin{equation}   \label{e:sups}
     \begin{cases}
      v \geq 0 \\
      v\text{ is continuous in  } \Omega \\
           \Delta_c v \leq 0  \text{ in } \{v>0\} \\
         \lim_{r \to 0} \sup_{B_r(x)} |\nabla_c v| < 1  \text{ for any } x \in \partial \{v>0\}\setminus \{0\}.
   \end{cases}
   \end{equation}  
   
   From the comparison principle as well as the gradient condition, it follows that if $u$ is a solution and $v$ a subsolution to the free boundary problem on $B_r$
   with $v \leq u$ in $B_r$, then $v<u$ in $\{u>0\} \cap B_r$ and also $\partial \{u>0\} \cap \partial \{v>0\} = \emptyset$ in $B_r$. An analogous statement holds for supersolutions. 
   To prove that $\tilde{u}_c$ is a minimizer we construct a continuous family of subsolutions $\tilde{v}_{\epsilon}$ with $\tilde{v}_{\epsilon} \leq \tilde{u}_c$  and 
   $\tilde{v}_{\epsilon} \nearrow \tilde{u}_c$ as $\epsilon \to 0$. 
   If there is another solution 
   $u_1$ with $u_1(x_1) <\tilde{u}_c(x_1)$ for some $x \in B_1$, then we first choose $\epsilon$ large enough so that $\tilde{v}_{\epsilon} \leq u_1$ on $\overline{B}_1$. By decreasing
   $\epsilon$, there exists $\epsilon_0$ and $x_2 \in B_1$ such that $\tilde{v}_{\epsilon_0}(x_2)=u_1(x_2)$ for $x_2 \in \{u_1>0\}\cap \{\tilde{v}_{\epsilon_0}>0\}$ 
   or $x_2 \in \partial \{u_1>0\}\cap  \partial\{\tilde{v}_{\epsilon_0}>0\}$. This would then be a contradiction. 
   By constructing a continuous family of supersolutions from above and applying an analogous argument, we determine that 
   $u_c$ is a unique solution subject to its boundary data. Since a minimizer exists and is a solution, we conclude that $u_c$ is the minimizer.

   We recall the construction of our family of subsolutions from \cite{a17}. We fix $c <c_0$ and let $\tilde{v}_{\epsilon}=\max\{r\tilde{f}_{c}(\phi) - \epsilon r^{-1/2} \tilde{g}_c(\phi),0\}$,
   see Figure 1. 
   We also fix $\epsilon$, relabel $\tilde{v}_{\epsilon}$ as $\tilde{v}$, and note that 
   $\Delta_c \tilde{v}=0$ in $\{\tilde{v}>0\}$. In this section we use computer-assisted proof to conclude that $|\nabla_c \tilde{v}|>1$ on $\{\tilde{v}=0\}$. We note that on $\{\tilde{v}=0\}$
   we have $r\tilde{f}_c(\phi)=\epsilon r^{-1/2}\tilde{g}_c(\phi)$, so that on $\{\tilde{v}=0\}$ we obtain 
   \begin{equation}   \label{e:express}
     | \nabla_c \tilde{v} |^2 = \frac{1}{1+c^2} \tilde{v}_r^2 +\frac{1}{r^2} \tilde{v}_{\phi}^2 
     = \frac{9/4}{1+c^2} \tilde{f}_c^2(\phi) + \left[\tilde{f}_c'(\phi)-\tilde{f}_c(\phi) \frac{\tilde{g}_c'(\phi)}{\tilde{g}_c(\phi)} \right]^2. 
    \end{equation}
    Note that \eqref{e:express} is the same no matter the value of $\epsilon>0$. 
   Under the change of variable $t=\cos(\phi)$ we obtain 
    \[
     | \nabla_c v |^2 = \frac{9/4}{1+c^2} f_c^2(t) + (1-t^2)\left[f_c'(t)-f_c(t) \frac{g_c'(t)}{g_c(t)} \right]^2. 
    \]
    If $t_c$ is the unique value such that $f_c(t_c)=0$, then $t_c <0$. Furthermore, if $v(r,t)=0$, then $t>t_c$. 
   Now  
     \begin{equation}  \label{e:subderiv}
      |\nabla_{c} v(r, t_c)|^2 =  (1-t_c^2)\left[f_c'(t_c)-f_c(t_c) \frac{g_c'(t_c)}{g_c(t_c)} \right]^2 =  (1-t_c^2)|f_c'(t_c)|^2 = |\tilde{f}_c'(\phi_c)|^2=1.     
     \end{equation}

 Consider now the expression 
     \begin{equation}    \label{e:deriva1}
      G_c(t):=\frac{9/4}{1+c^2} f_c^2(t) + (1-t^2)\left[f_c'(t)-f_c(t) \frac{g_c'(t)}{g_c(t)} \right]^2. 
     \end{equation}
     Notice that $G_c(t_c)=1$ and we want $G_c(t)>1$ when $t>t_c$, so that $v$ is a subsolution. It is then sufficient to show
     that $G_c'(t)>0$ for $t_c\leq t<1$. This was the key step in \cite{a17}, and a compactness argument was used to show this is the case for $c$ close to $0$. We  use computer-assisted proof to show this is true 
     whenever $c\leq \tilde c_0$ in Theorem \ref{t:subbound}. 
     
     \begin{figure}[t]     \label{f:below1}
     \includegraphics[width=60mm]{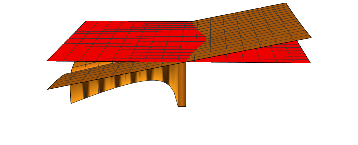}       
       \caption{The subsolution $\max\{\tilde{v},0\}$}
    \end{figure}

     \begin{theorem}  \label{t:subbound}
      If $\beta \in[0,0.58828]$, then $G_c'(t)>0$ for $t_c<t<1$, for $G_c$ as defined in \eqref{e:deriva1}.  
     \end{theorem}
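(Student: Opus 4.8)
The plan is to convert the inequality $G_c'(t) > 0$ into a single polynomial positivity statement in the four quantities $f_c(t), f_c'(t), g_c(t), g_c'(t)$, and then certify that statement by interval arithmetic over the two-parameter region $\{(c,t) : 0 \le c \le \tilde c_0,\ t_c < t < 1\}$, where $\tilde c_0 \approx 0.5884$ is the endpoint of the stated range.

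First I would differentiate \eqref{e:deriva1}. Writing $A = \frac{9/4}{1+c^2}$ and $H(t) = f_c'(t) - f_c(t)\,g_c'(t)/g_c(t)$ for the bracketed factor in \eqref{e:deriva1}, one has $G_c = A f_c^2 + (1-t^2)H^2$ and hence
\[
 G_c'(t) = 2A f_c f_c' - 2t H^2 + 2(1-t^2) H H'.
\]
The only obstruction to a clean algebraic form is $H'$, which contains $f_c''$ and $g_c''$; but these are governed by the Legendre equation \eqref{e:leg1}, namely $(1-t^2)f_c'' = 2t f_c' - \beta_f f_c$ and $(1-t^2)g_c'' = 2t g_c' - \beta_g g_c$ with $\beta_f = 2/(1+c^2)$ and $\beta_g = -(1/4)/(1+c^2)$. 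Substituting these shows that $(1-t^2)H'$ is free of the apparent $1/(1-t^2)$ singularity, so $G_c'$ becomes an algebraic function of $f_c, f_c', g_c, g_c', t, c$ whose only denominators are powers of $g_c$. After verifying $g_c > 0$ on $[t_c,1)$, I would clear denominators by multiplying by $g_c^3$, obtaining a polynomial $\Phi_c(t) := g_c^3\,G_c'(t)$ in $f_c, f_c', g_c, g_c', t, c$ with the same sign as $G_c'$. This is the object best suited to rigorous computation, since it contains no cancelling singularities to inflate interval enclosures.

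Next I would produce validated enclosures. I subdivide $[0,\tilde c_0]$ into $c$-intervals; on each one $\beta_f, \beta_g$ and the recursion coefficients $a_n(\beta)$ become intervals, and Proposition \ref{p:e1}, or the analytic interpolants of $f_c$ and $g_c$ described above to suppress the wrapping effect, furnish interval enclosures of $f_c, f_c', g_c, g_c'$ as functions of $t$ with truncation-error bounds. For each $c$-interval I first confirm $g_c(t) > 0$ for $t \in [t_c,1)$ and rigorously enclose the root $t_c$ of $f_c$ (by interval bisection or an interval Newton step), obtaining $t_c \in [t_c^-, t_c^+]$. I then cover $\{t_c^- \le t \le 1\}$ by finitely many $t$-subintervals and, on each box formed by a $c$-interval and a $t$-interval, evaluate an interval enclosure of $\Phi_c(t)$ and check that its lower endpoint is strictly positive. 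A successful pass over all boxes yields $G_c'(t) > 0$ on $(t_c,1)$ for every $c$ in the range.

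The main obstacle is the corner where $t \downarrow t_c$ and $c \uparrow \tilde c_0$. Using $f_c(t_c) = 0$, a short computation gives $G_c'(t_c) = 2 f_c'(t_c)^2\big[\,t_c - (1-t_c^2)\,g_c'(t_c)/g_c(t_c)\,\big]$, and the bracket is precisely the quantity whose sign is controlled by the critical-value inequality of Section \ref{s:critical}: it vanishes at $c = c_0$ and is only barely positive for $c$ slightly below. Hence a naive grid check degenerates there and the enclosures must be extremely tight. I expect to treat this region separately by a local Taylor expansion of $G_c'$ about $t = t_c$, computing $G_c'(t_c)$ and $G_c''(t_c)$ in closed form (again eliminating second derivatives via \eqref{e:leg1}), bounding the remainder on a short interval $[t_c, t_c + \delta]$, and certifying positivity there analytically, while reserving the interval-arithmetic grid for $[t_c + \delta, 1]$ where $G_c'$ is bounded away from zero. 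The regular endpoint $t \to 1$ poses no difficulty, since $f_c, f_c', g_c, g_c'$ extend smoothly to $t = 1$ and $(1-t^2)H' \to 0$ there, so the polynomial $\Phi_c$ is evaluated without incident.
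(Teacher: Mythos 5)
Your proposal is correct and follows essentially the same route as the paper's proof: differentiate $G_c$, use positivity of $g_c$ (the paper proves $g \geq 1$ via positivity of its series coefficients) to reduce to positivity of $g_c^3 G_c'$, rigorously enclose the root $t_c$, and certify positivity by interval arithmetic with analytic-interpolation error bounds over a subdivided $(c,t)$ domain. The differences are implementational rather than conceptual: the paper retains $f_c''$ and $g_c''$ in the cleared expression and bounds them through series-tail estimates instead of eliminating them via the Legendre equation, and it copes with the degeneration of $G_c'(t_c)$ as $c \uparrow c_0$ (which you correctly identify) simply by stopping at $0.58828 < c_0$ and evaluating on a sufficiently fine grid, with no separate Taylor-expansion argument near $t = t_c$.
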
 
     
     Before proving Theorem \ref{t:subbound}, we establish a few helpful results.
     
     \begin{lemma}
If $\beta \in \left[\frac{3}{2},2\right]$ and $t\in [-1,1]$, then $g(t,\beta) \geq 1$. 
\label{lemma:g_is_positive}
\end{lemma}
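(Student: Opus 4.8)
The plan is to prove the bound directly from the power-series expansion of $g$ in powers of $(1-t)$, bypassing the differential equation. The key is a sign observation. Writing $\beta = 2/(1+c^2)$ for the index of the cone, the function $g(\cdot,\beta)=g_c$ is the first-kind Legendre solution attached to $\alpha=-1/2$, so by the discussion surrounding \eqref{e:leg1} it solves that equation with the non-positive parameter $-1/(4(1+c^2)) = -\beta/8$. Consequently its expansion $g(t,\beta)=\sum_{n\ge 0} a_n (1-t)^n$, normalized by $a_0 = g(1,\beta)=1$, obeys the recursion $a_{n+1} = \tfrac{a_n}{2}\cdot \tfrac{n^2+n+\beta/8}{(n+1)^2}$, in which every factor is strictly positive. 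Since $(1-t)^n\ge 0$ for $t\le 1$, once all the $a_n$ are known to be non-negative the inequality $g(t,\beta)\ge a_0 = 1$ is immediate, term by term.

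Accordingly, the first step is to record, by induction on $n$, that $a_n>0$ for all $n$. This is precisely where the $\alpha=-1/2$ branch is used: because $\beta>0$ the quantity $n^2+n+\beta/8$ is positive for every $n\ge 0$, in particular at $n=0$, where for the companion function $f$ (parameter $+\beta$) the corresponding factor $-\beta$ is negative and breaks the pattern. Starting from $a_0=1$, positivity therefore propagates up the recursion without interruption, and the single delicate index $n=0$ causes no sign change.

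With non-negativity of the coefficients in hand, the bound is essentially automatic on the half-open interval. For $t\in(-1,1]$ each summand $a_n(1-t)^n$ is non-negative, so $g(t,\beta)\ge a_0=1$; equivalently $g'(t)=-\sum_{n\ge 1} n a_n (1-t)^{n-1}\le 0$, so $g(\cdot,\beta)$ is non-increasing and attains its minimum $1$ at $t=1$. At the left endpoint $t=-1$ the increment $(1-t)=2$ sits on the boundary of the disc of convergence; here I would argue that the partial sums $\sum_{n=0}^N a_n 2^n$ increase with $N$ and each is at least $a_0=1$, so their limit, finite or $+\infty$, is again at least $1$, extending the inequality to the closed interval $[-1,1]$. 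Convergence of the series on $(-1,1]$, together with the tail control needed to differentiate termwise, is supplied by Proposition~\ref{p:e1}.

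The argument is short, so the difficulty is really one of correct bookkeeping rather than analytic depth: one must verify that the sign condition $n^2+n+\beta/8>0$ holds uniformly in $n$ across the whole range of $\beta$, which is exactly the structural fact that separates $g$ from $f$, and one must legitimize the passage to the limit at $t=-1$. I expect the first of these to be the substantive point, since it is precisely the non-positivity of the Legendre parameter governing $g$ that makes the lower bound \emph{true at all}.
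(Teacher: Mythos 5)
Your proof is correct and follows essentially the same route as the paper: both arguments deduce positivity of the coefficients $a_n(-\beta/8)$ by induction on the recursion (using that $n^2+n+\beta/8>0$ when $\beta>0$, exactly the sign structure you highlight) and then conclude $g(t,\beta)\ge a_0=1$ term by term since $(1-t)^n\ge 0$ on $[-1,1]$. Your additional care at the endpoint $t=-1$, via monotone partial sums, is a refinement the paper's proof silently skips (the series there in fact diverges to $+\infty$), but it does not change the substance of the argument.
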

\begin{proof}
Recall that $g(t,\beta) = f(t,-\beta/8)$. Thus 
\eq{
g(t,\beta) = \sum_{n = 0}^{\infty} a_n(-\beta/8)(1-t)^n,
}{\notag}
where $a_0(-\beta/8) = 1$, $a_1(-\beta/8) = \beta/16$, and for $n\geq 1$,
\eq{
a_{n+1}(-\beta/8) & = \frac{a_n(-\beta/8)}{2(n+1)^2}(n^2+n+\beta/8).
}{\notag}
Since $a_1(-\beta/8)>0$ for all $\beta \in \left[\frac{3}{2},2\right]$, we have by induction that $a_n(-\beta/8)>0$ for all $n\in \N$ whenever $\beta \in \left[\frac{3}{2},2\right]$. Then since $t\in [-1,1]$, we have that $g(t,\beta) \geq 1$.
\end{proof}

Recall that $f(t,\beta):= \sum_{n=0}^{\infty} a_n(\beta)(1-t)^n$ where $a_{n+1}(\beta) := \frac{a_n(\beta)}{2}\left(\frac{n^2+n-\beta)}{(n+1)^2}\right)$ for $n\geq 0$ and $a_0 = 1$. The following lemma helps bound the tail series representation of $|f(t;\beta)|$.

\begin{lemma}\label{lemma6}
If $|\beta| \leq n+1$, then
\eq{
\left|\frac{n^2+n-\beta)}{(n+1)^2}\right|  \leq 1.
}{\notag}
\end{lemma}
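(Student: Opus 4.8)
The claim is a purely elementary bound: if $|\beta|\leq n+1$, then $\left|\frac{n^2+n-\beta}{(n+1)^2}\right|\leq 1$. The plan is to bound the numerator in absolute value and compare it with the denominator $(n+1)^2 = n^2+2n+1$. Since $n\geq 0$ is an integer (so $n^2+n\geq 0$), I would split into the two obvious cases coming from the sign of $n^2+n-\beta$, or equivalently just invoke the triangle inequality together with the hypothesis.

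\begin{proof}
Assume $|\beta|\leq n+1$. Then $-(n+1)\leq \beta\leq n+1$. We bound the numerator from above and below. Since $\beta\leq n+1$, we have
\eq{
n^2+n-\beta \geq n^2+n-(n+1) = n^2-1 \geq -1 \geq -(n+1)^2.
}{\notag}
Since $\beta\geq -(n+1)$, we have
\eq{
n^2+n-\beta \leq n^2+n+(n+1) = n^2+2n+1 = (n+1)^2.
}{\notag}
Combining these two inequalities gives $-(n+1)^2 \leq n^2+n-\beta \leq (n+1)^2$, and therefore
\eq{
\left|\frac{n^2+n-\beta}{(n+1)^2}\right| \leq 1,
}{\notag}
as claimed.
\end{proof}

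There is essentially no obstacle here; the only point requiring a moment of care is the lower bound, where one must check that $n^2-1\geq -(n+1)^2$, which holds since $n^2-1+(n+1)^2 = 2n^2+2n\geq 0$ for all $n\geq 0$. I would present the argument exactly as a two-sided estimate on the numerator rather than casing on its sign, since that keeps the write-up shortest and makes transparent that the hypothesis $|\beta|\leq n+1$ is used in both directions.
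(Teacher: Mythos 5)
Your proof is correct and is essentially the paper's argument: the paper simply applies the triangle inequality in one chain, $\left|\frac{n^2+n-\beta}{(n+1)^2}\right|\leq \frac{n^2+n+|\beta|}{(n+1)^2}\leq \frac{n^2+2n+1}{(n+1)^2}=1$, which is the compressed form of your two-sided estimate (and indeed the alternative you yourself flagged in your plan). The only difference is presentational; your version additionally checks the easy inequality $n^2-1\geq -(n+1)^2$, which the triangle-inequality phrasing avoids.
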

\begin{proof}
Assume that $|\beta|\leq n+1\in \mathbb{N}$. Then
\eq{
\left|\frac{n^2+n-\beta)}{(n+1)^2}\right| &\leq \frac{n^2+n+|\beta|}{(n+1)^2}\\
& \leq \frac{n^2+2n+1}{(n+1)^2} \\
& = 1.
}{}
\end{proof}
\begin{corollary}\label{corollary1}
If $k+1\geq |\beta|$, then for all $n\geq k+1$, $|a_n(\beta)|\leq |a_{k}(\beta)|\left(\frac{1}{2}\right)^{n-k}$.
\end{corollary}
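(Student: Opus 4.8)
The plan is to run a direct induction on $n$, using the recursion for $a_n(\beta)$ together with Lemma~\ref{lemma6} to supply a uniform contraction factor of $\tfrac12$ at each step. The observation that makes everything work is that the hypothesis $k+1 \geq |\beta|$ controls the multiplier appearing in the recursion not only at index $k$ but at every larger index: for any $n \geq k$ we have $n+1 \geq k+1 \geq |\beta|$, so Lemma~\ref{lemma6} applies with this $n$ and gives $\left|\frac{n^2+n-\beta}{(n+1)^2}\right| \leq 1$. This is the only place the hypothesis enters, and it is precisely why the statement is phrased with the threshold $k+1 \geq |\beta|$ rather than at a single index.

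First I would record the consequence of this for the recursion. Since $a_{n+1}(\beta) = \frac{a_n(\beta)}{2}\cdot\frac{n^2+n-\beta}{(n+1)^2}$, taking absolute values and applying the bound above yields the one-step estimate $|a_{n+1}(\beta)| \leq \tfrac12\,|a_n(\beta)|$ for every $n \geq k$. Then I would prove the claimed inequality by induction on $n \geq k$. The base case $n=k$ is the trivial identity $|a_k(\beta)| \leq |a_k(\beta)|$. For the inductive step, assuming $|a_n(\beta)| \leq |a_k(\beta)|\left(\tfrac12\right)^{n-k}$ for some $n \geq k$, the one-step bound gives $|a_{n+1}(\beta)| \leq \tfrac12\,|a_n(\beta)| \leq |a_k(\beta)|\left(\tfrac12\right)^{n+1-k}$, which closes the induction and in particular establishes the estimate for all $n \geq k+1$.

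There is no substantive obstacle here; the content is entirely contained in Lemma~\ref{lemma6}, and the corollary is the routine geometric-decay consequence of the contraction it provides. The single point requiring any care is confirming that Lemma~\ref{lemma6} may be invoked at \emph{every} index $n \geq k$ and not merely at $n = k$, which is exactly what the hypothesis $k+1 \geq |\beta|$ guarantees; getting this index bookkeeping right is essentially the whole argument.
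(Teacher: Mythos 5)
Your proof is correct and is exactly the argument the paper intends: the corollary is stated without proof as an immediate consequence of Lemma~\ref{lemma6}, and your induction---noting that $k+1\geq|\beta|$ lets Lemma~\ref{lemma6} be applied at every index $n\geq k$, giving the one-step contraction $|a_{n+1}(\beta)|\leq\tfrac12|a_n(\beta)|$---is precisely the routine deduction being suppressed. Nothing to add.
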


\begin{lemma}\label{lemma7}
If $k+1\geq |\beta|$ and $|1-t|\leq r < 2$, then

\eq{
|f(t,\beta)| &\leq \left| \sum_{n = 1}^k a_n(\beta) (1-t)^{n}\right| +  \frac{|a_k(\beta)|r^{k+1}}{(2-r)},\\
|f'(t,\beta)| &\leq \left| \sum_{n = 1}^k a_n(\beta) n(1-t)^{n-1}\right| +|a_k(\beta)|\frac{2^{k-3}}{(2-r)^2},\\
|f''(t,\beta)| &\leq \left| \sum_{n = 1}^k a_n(\beta) n(n-1)(1-t)^{n-2}\right| +|a_k(\beta)|\frac{2^{k-5}}{(2-r)^3}.
}
\end{lemma}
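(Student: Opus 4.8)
The plan is to decompose $f$ and each of its first two derivatives into the finite partial sum displayed on the right-hand side plus a remainder tail, and then to dominate the tail by a convergent geometric majorant supplied by Corollary \ref{corollary1}. Writing $f(t,\beta)=\sum_{n=0}^{\infty}a_n(\beta)(1-t)^n$, the hypothesis $k+1\geq|\beta|$ is precisely what licenses Corollary \ref{corollary1}, namely $|a_n(\beta)|\leq|a_k(\beta)|(1/2)^{n-k}$ for all $n\geq k+1$. This geometric decay of the tail coefficients relative to $|a_k(\beta)|$ is the engine behind all three estimates.

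For the bound on $|f(t,\beta)|$ itself, I would split the series at index $k$, apply the triangle inequality, and estimate the tail termwise: using $|1-t|\leq r$ and the coefficient bound, the tail is at most $|a_k(\beta)|\sum_{n\geq k+1}(1/2)^{n-k}r^n$. Re-indexing by $m=n-k$ turns this into $|a_k(\beta)|r^k\sum_{m\geq1}(r/2)^m=|a_k(\beta)|r^{k+1}/(2-r)$, where the requirement $r<2$ is exactly what makes the geometric series converge; this reproduces the stated tail term.

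For the derivative bounds the clean device is the majorant $\Phi(s):=\sum_{n\geq k+1}(1/2)^{n-k}s^n=s^{k+1}/(2-s)$, valid for $|s|<2$. Differentiating the series for $f$ term by term and comparing termwise against $\Phi$, the remainder of $f^{(j)}$ after its partial sum is bounded in absolute value by $|a_k(\beta)|\,\Phi^{(j)}(r)$: the alternating signs produced by differentiating $(1-t)^n$ disappear under absolute values, and every term of $\Phi^{(j)}(r)$ is nonnegative since $r>0$, so the termwise comparison is legitimate. It then remains to evaluate $\Phi'(r)$ and $\Phi''(r)$, which are rational functions of the form (polynomial in $r$)$\,\cdot\,r^{k-j+1}$ divided by $(2-r)^{j+1}$; bounding the numerator by its supremum over $0<r<2$ leaves a constant multiple of $(2-r)^{-2}$ and $(2-r)^{-3}$, respectively, matching the shape of the claimed bounds.

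The routine-but-delicate part, and the step I expect to require the most care, is the final constant bookkeeping for the derivative tails: one must differentiate $\Phi$, control the resulting polynomial numerators uniformly on $0<r<2$ (these numerators turn out to be monotone in $r$, which makes their suprema explicit), and keep exact track of the powers of $2$ so that the advertised constants $2^{k-3}$ and $2^{k-5}$ are obtained. Everything upstream—the splitting, the geometric summation, and the termwise majorant comparison—is mechanical once Corollary \ref{corollary1} is available, so the genuine content is the closed form $\Phi(s)=s^{k+1}/(2-s)$ together with the elementary optimization of its differentiated numerators.
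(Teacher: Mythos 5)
Your plan---split the series at index $k$, invoke Corollary \ref{corollary1}, and dominate the tails of $f^{(j)}$ by derivatives of the geometric majorant $\Phi(s)=s^{k+1}/(2-s)$---is structurally the same as the paper's proof (the paper merely enlarges the tail sum to start at $n=0$ and sums the full geometric series in closed form). The zeroth-order bound goes through exactly as you describe. The genuine gap is in the step you deferred to the end: the ``constant bookkeeping'' for the derivative tails does not produce the advertised constants $2^{k-3}$ and $2^{k-5}$, and no amount of care will make it do so. Carrying out your own recipe: $\Phi'(r)=r^{k}\bigl(2(k+1)-kr\bigr)/(2-r)^{2}$, and the numerator $r^{k}\bigl(2(k+1)-kr\bigr)$ is indeed increasing on $(0,2)$ (its derivative is $k(k+1)r^{k-1}(2-r)\geq 0$), but its supremum is therefore the limiting value $2^{k+1}$ at $r=2$. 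So the best bound your method can yield is $|a_k(\beta)|\,2^{k+1}/(2-r)^{2}$, which is $2^4=16$ times larger than the stated $|a_k(\beta)|\,2^{k-3}/(2-r)^{2}$; similarly one gets $\Phi''(r)\leq 2^{k+2}/(2-r)^{3}$ (again sharp as $r\to 2$), which is $2^{7}$ times the stated $2^{k-5}/(2-r)^{3}$. Since these bounds are attained in the limit $r\to 2$, the discrepancy cannot be optimized away.

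For what it is worth, the defect is not yours alone: the paper's own proof makes the parallel arithmetic slip, asserting $\sum_{n\geq 0}(1/2)^{n-k}\,n\,r^{n-1}=2^{k-3}/(2-r)^{2}$, whereas this sum equals $2^{k-1}\sum_{n\geq 0}n(r/2)^{n-1}=2^{k+1}/(2-r)^{2}$ (and likewise $\sum_{n\geq 0}(1/2)^{n-k}\,n(n-1)\,r^{n-2}=2^{k+2}/(2-r)^{3}$, not $2^{k-5}/(2-r)^{3}$). Indeed the lemma as printed is false: take $\beta=-2$ and $k=1$ (so $k+1=|\beta|$), $t=-0.9$, $r=1.9$; then every $a_n$ is positive and the tail $\sum_{n\geq 2}n\,a_n(1.9)^{n-1}$ already exceeds the claimed bound $|a_1|\,2^{-2}/(0.1)^{2}=25$ after its first ten terms, with infinitely many positive terms remaining. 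So your argument, carried out honestly, proves the lemma with $2^{k+1}$ and $2^{k+2}$ in place of $2^{k-3}$ and $2^{k-5}$---that corrected form is what should be used downstream in the modulus bounds feeding Table 1---but the final claim of your proposal, that the printed constants emerge from the optimization, is a step that fails.
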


\begin{proof}
By the corollary,  we have that
\eq{
	|f(t,\beta)|&\leq \left| \sum_{n = 1}^k a_n(\beta) (1-t)^{n}\right| +\sum_{n = k+1}^{\infty} |a_n(\beta)| r^{n}\\
	&\leq \left| \sum_{n = 1}^k a_n(\beta) (1-t)^{n}\right| + \sum_{n = k+1}^{\infty} |a_k(\beta)|\left( \frac{1}{2}\right)^{n-k}  r^{n}\\
	&= \left| \sum_{n = 1}^k a_n(\beta) (1-t)^{n}\right| + |a_k(\beta)|\left( \frac{1}{2}\right)^{-k}\sum_{n = k+1}^{\infty} \left( \frac{r}{2}\right)  ^{n}\\
	&= \left| \sum_{n = 1}^k a_n(\beta) (1-t)^{n}\right| +  \frac{|a_k(\beta)|r^{k+1}}{(2-r)}.
}{\notag}

\eq{
|f'(t,\beta)|&\leq \left| \sum_{n = 1}^k a_n(\beta) n(1-t)^{n-1}\right| +\sum_{n = k+1}^{\infty} |a_n(\beta)|n r^{n-1}\\
&\leq \left| \sum_{n = 1}^k a_n(\beta) n(1-t)^{n-1}\right| +|a_k(\beta)| \sum_{n = 0}^{\infty} \left( \frac{1}{2}\right)^{n-k} n r^{n-1}\\
&= \left| \sum_{n = 1}^k a_n(\beta) n(1-t)^{n-1}\right| +|a_k(\beta)|\left(\frac{1}{2}\right)^{3-k}\frac{1}{(2-r)^2}.
}{\notag}

\eq{
	|f''(t,\beta)|&\leq \left| \sum_{n = 1}^k a_n(\beta) n(n-1)(1-t)^{n-2}\right| +\sum_{n = k+1}^{\infty} |a_n(\beta)|n(n-1) r^{n-2}\\
	&\leq \left| \sum_{n = 1}^k a_n(\beta) n(n-1)(1-t)^{n-2}\right| +|a_k(\beta)| \sum_{n = 0}^{\infty} \left( \frac{1}{2}\right)^{n-k} n(n-1) r^{n-2}\\
	&= \left| \sum_{n = 1}^k a_n(\beta) n(n-1)(1-t)^{n-2}\right| +|a_k(\beta)|\left(\frac{1}{2}\right)^{5-k}\frac{1}{(2-r)^3}.
}{\notag}
\end{proof}

We are now ready to prove  Theorem \ref{t:subbound}. 	
	\begin{cproof}
Recall that 
\eq{
G_c(t):= \sigma f^2(t;\beta_c) + (1-t^2)\left[f'(t;\beta_c)-\frac{f(t;\beta_c)g'(t;\beta_c)}{g(t;\beta_c)}\right]^2,
}
where $\sigma:= (9/4)/(1+c^2)$, $\beta_c = 2/(1+c^2)$, and $'=\frac{\partial}{\partial t}$. Suppressing the function arguments, we have
\eq{
G_c'(t) = 2\sigma ff'-2t(f'-fg'/g)^2+2(1-t^2)(f'-fg'/g)(f''-f'g'/g-fg''/g+f(g')^2/g^2).
}{}
In Lemma \ref{lemma:g_is_positive}, we showed that $g > 0$ for $t_c\leq t\leq 1$, and so it suffices to show that $g^3G'(t) > 0$. That is, it suffices to show that
\eqn{
0<& 2\sigma f f'g^3-2t(f'g-fg')^2g+ \\
&2(1-t^2)(f'g-fg')(f''g^2-f'g'g-fg''g+f(g')^2).
}{eq:G_fun:modified}
To accomplish our goal, we will interpolate the RHS of \eqref{eq:G_fun:modified} together with analytic interpolation error bounds, as described in \cite{Barker,Barker2014}. We interpolate $g^3G'$ on the domain $(t,\beta)\in [-0.204,1]\times [-0.161,2]$. After a linear change of coordinates that maps 
the domain to $[-1,1]\times [-1,1]$, we form ellipses $E_t$ and $E_{\beta}$ in the complex plane,  where the stadiums are as defined in \cite{RW}, for the variables $t$ and $\beta$ using $\rho_{t} = 2.9$ and $\rho_{\beta} = 30$. In order to use analytic interpolation error bounds, we must show $g^3G'(\cdot,\beta)$ is analytic inside and on the ellipse $E_t$ for all $\beta\in[-1,1]$, and that $g^3G'(t,\cdot)$ is analytic inside and on the ellipse $E_{\beta}$ for all $t\in [-1,1]$. Note that $g^3G'$ will be analytic as long as $f$, $f'$, $f''$, $g$, $g'$, $g''$ are analytic, which they are as given by our rigorous computations related to the Lemmata \ref{lemma6}-\ref{lemma7} and Corollary \ref{corollary1}, with $k =  \lceil\frac{a+b}{2} +\frac{b-a}{2}(\rho_{\beta}+\frac{1}{\rho_{\beta}})  \rceil - 1$, which ensures $k+1 \ge |\beta|$, where $|\beta| \leq \frac{a+b}{2} +\frac{b-a}{2}(\rho+\frac{1}{\rho})$. We record the bounds we obtain, $|f|,|g|\leq M_1$, $|f'|, |g'|\leq M_2$, $|f''|,|g''|\leq M_3$, in Table 1. Applying standard bounds, we have
\eq{
|g^3(t)G'(t)| &\leq 2B_{\sigma}M_1M_2M_1^3+8B_{E_{t}}M_1^3M_2^2\\&+ 4(1+B_{E_{t}}^2)(M_1M_2)(M_3M_1^2+M_1^2M_2+M_1^2M_3+M_1M_2^2),
}{}
where $B_{\sigma} = \frac{9}{8}B_{E_{{\beta}}}$, and $B_{E_{{\beta}}}$ is a bound on the modulus of elements in $E_{\beta}$, and $B_{E_t}$ is a bound on the modulus of elements in $E_t$.

We break the interval $[0,0.58828]$ up into several sub intervals, and then for each of those subintervals, we get a lower bound on the root of $f$ using our rigorous interpolation of $f$. Then we evaluate, using the method described in \cite{Barker,Barker2014},  our rigorous interpolation of $g^3G'$ on a grid of subintervals in $t$ and $\beta$ to confirm that $g^3G' \geq 0$ on that sub-grid. The code for this rigorous computation is available on GitHub at \url{https://github.com/bhbarker/rigorous_computation/tree/main/minimizers_free_boundary_cone}.

\end{cproof}

\begin{center}
\begin{table}\label{tb:bounds1b}
\begin{tabular}{| c ||c| c|c| }
\hline
Function & $|f|$, $|g|$ & $|f'|$, $|g'|$ & $|f''|$, $|g''|$\\
\hline
Bound &       12402 &      146200 &    553380\\
\hline
\end{tabular}
\caption{Table demonstrating the bounds on $f$ and $g$ and their first two derivatives on an ellipse.}
\end{table}
\end{center}

 \section{A continuous family of supersolutions}  \label{s:supersolution} 
 
  \begin{figure}[h]     \label{f:above}
     \includegraphics[width=60mm]{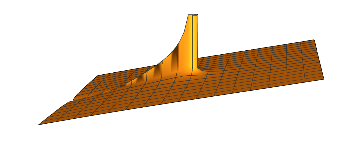}       
       \caption{  $\tilde{v}_c=r\tilde{f}_{c}(\phi) + \epsilon r^{-1/2} \tilde{g}_c(\phi)$. }
    \end{figure} 
 
   In order to construct a family of supersolutions we begin with ideas similar to those in Section \ref{s:subsolutions}. In this section we consider 
   $\tilde{v}_c=r\tilde{f}_{c}(\phi) + \epsilon r^{-1/2} \tilde{g}_c(\phi)$. We note that both $f_c$ and $g_c$ have regular singular points at $t=-1$ and also the same 
   asymptotic expansion around $t=-1$. Therefore, $\tilde{v}_c$ will be positive in a neighborhood of the origin (and increase without bound close to the origin).  Therefore, 
   in the distributional sense $\Delta_c \tilde{v}_c \leq 0$ in $\{\tilde{v}_c >0\}$. 
    
   Figure 2 illustrates why the construction of a supersolution will be more difficult. 
   Although it is true that $\Delta_c \tilde{v}_c \leq 0$ in $\{\tilde{v}_c>0\}$, the second condition $|\nabla_c \tilde{v}_c | <1$ on $\partial \{\tilde{v}_c>0\}$
   will not be true because of the singularity 
   at the origin. However, from Theorem \ref{t:subbound} the second condition will be true whenever $\phi$ is close to $\phi_c$. We will utilize this fact to modify
   $\tilde{v}_c$ and construct a supersolution to our free boundary problem. 
   To remove the portion of $\tilde{v}_c$ that contains the singularity, 
   we construct a $c$-harmonic function with boundary values close to that of $\tilde{v}_c(r,\phi)$.  If we have a $c$-harmonic function of the form $r^{\alpha}\tilde{h}(\phi)$, 
   then from \eqref{e:legalpha} we have under the change of variable that $h(t)$ is a Legendre function. Since we need $r^{\alpha}\tilde{h}(\phi)$ to be $c$-harmonic
   in all of $B_1$ it follows that $h(t)$ must be a Legendre polynomial and that 
   \[
    \frac{\alpha(\alpha+1)}{1+c^2} = n(n+1) \quad \text{ for some } n \in \mathbb{N}. 
   \]
   To denote the dependence on $n$ we write $\alpha_n$, and solving for $\alpha_n$ we obtain
   \[
    \alpha_n = \frac{-1+ \sqrt{1+4n(n+1)(1+c^2)}}{2}. 
   \]
   Our goal is to approximate $f_c(t) + \epsilon g_c(t)$ with Legendre functions $h_n(t)$. If $f_c(t) + \epsilon g_c(t) \approx \sum_{n=0}^N a_n h_n$ with $h_n$ a Legendre polynomial of 
   degree $n$, then $w(r,t)=\sum_{n=0} a_n r^{\alpha_n} h_n(t)$ will be a $c$-harmonic function. We note that $h_n$ are independent of $c$, but the coefficients 
   $a_n$ and the powers $\alpha_n$ will vary with $c$. 
   By appropriately choosing $a_n$,  we will obtain that $p_c(r,t):=\max\{\min\{v_c,w_c\},0\}$ will be 
   a supersolution to our free boundary problem. This is the content of the next Lemma.

   \begin{lemma}   \label{l:above}
    Let $0 \leq c < c_0$. There exists $\epsilon>0$ and finitely many coefficients $a_n$ such that if $w_c(r,t)=\sum_{n=0} a_n r^{\alpha_n} h_n(t)$, and if 
    $p_c(r,t)=\max\{\min\{v_c,w_c\},0\}$, then the following conditions hold:
    \[
     \begin{aligned}
     &(1) \quad w_c(1,t)>f_c(t) + \epsilon g_c(t) \quad  \text{ on } \{w_c(1,t)>0\}. \\
     &(2) \quad \Delta_c \tilde{p}_c(r,\phi) \leq 0 \text{ in } \ \{\tilde{p}_c(r,\phi)>0\} \ \text{ in the distributional sense.} \\
     &(3) \lim_{\rho \to 0} \sup_{B_{\rho}(x)} |\nabla_c \tilde{p}_c(x)| <1 \quad \text{ for any } x \in \partial\{\tilde{p}_c>0\}.   
     \end{aligned}
    \]
   \end{lemma}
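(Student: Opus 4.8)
The plan is to treat $p_c$ as the pointwise minimum of two competing $c$-superharmonic functions and to locate the free boundary of $p_c$ on the union of the zero sets of $v_c$ and $w_c$, verifying the three conditions separately. First I would record the structural facts behind the construction. Each summand $r^{\alpha_n}h_n(t)$, with $h_n$ a Legendre polynomial of degree $n$ and $\alpha_n(\alpha_n+1)/(1+c^2)=n(n+1)$, is $c$-harmonic; since $\alpha_n\ge 0$ and $h_n$ is a polynomial, it is bounded and smooth up to $r=0$, so $w_c$ is $c$-harmonic on all of $B_1$. By contrast $v_c=rf_c+\epsilon r^{-1/2}g_c$ is $c$-harmonic away from the vertex but, because $g_c>0$ by Lemma \ref{lemma:g_is_positive} and the $r^{-1/2}$ factor forces $v_c\to+\infty$ at the origin, it is $c$-superharmonic there in the distributional sense, as already observed before the statement.

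For condition $(1)$ I would obtain the $a_n$ by expanding $f_c+\epsilon g_c$ in the Legendre basis $\{h_n\}$ on $[-1,1]$, truncating at a finite order $N$, and then shifting the truncation $\sum_{n=0}^N a_n h_n$ slightly upward so that it lies strictly above $f_c+\epsilon g_c$ wherever $w_c(1,\cdot)>0$; the truncation error and the adequacy of the shift are precisely the kind of pointwise estimate that the interpolation-with-error-bounds machinery of Section \ref{s:prelim} verifies rigorously. Condition $(2)$ is then immediate, since the pointwise minimum of two $c$-superharmonic functions is $c$-superharmonic and $p_c=\min\{v_c,w_c\}$ on $\{p_c>0\}$. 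The point that makes this useful is that near the vertex $v_c\to+\infty$ while $w_c$ stays bounded, so $\min\{v_c,w_c\}=w_c$ in a neighborhood of the origin; this is exactly how the construction excises the singular part of $v_c$.

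The heart of the matter is condition $(3)$. Since $\{p_c>0\}=\{v_c>0\}\cap\{w_c>0\}$, its boundary splits into a portion lying on $\{v_c=0\}$ with $w_c>0$, a portion lying on $\{w_c=0\}$ with $v_c>0$, and a corner where both vanish. On the first portion $p_c$ agrees locally with $\max\{v_c,0\}$, and the computation of \eqref{e:subderiv}--\eqref{e:deriva1} applies verbatim: $|\nabla_c v_c|^2$ on $\{v_c=0\}$ reduces to the very same expression $G_c(t)$ of \eqref{e:deriva1}. Because $G_c(t_c)=1$ and the rigorous bound underlying Theorem \ref{t:subbound}, verified on an interval reaching below $t_c$, gives $G_c'>0$ across $t_c$, one obtains $G_c(t)<1$ on exactly the range of $t$ realized on this portion of the free boundary, namely the values near $t_c$ that survive once $w_c$ has replaced the part of $\{v_c=0\}$ on which $G_c>1$. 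On the second portion $p_c$ agrees locally with $w_c$, so I would compute $|\nabla_c w_c|^2=\tfrac{1}{1+c^2}(w_c)_r^2+\tfrac{1}{r^2}(w_c)_\phi^2$ on $\{w_c=0\}$ from the explicit finite sum and confirm $<1$ by rigorous computation. The $\limsup$ form of the gradient condition then handles the corner automatically, since near it the supremum of $|\nabla_c p_c|$ over small balls is the larger of the two one-sided gradient magnitudes, both already below $1$.

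The main obstacle is the simultaneous calibration of $\epsilon$ and the coefficients $a_n$: a single $w_c$ must sit above $v_c$ at $r=1$ for condition $(1)$, fall below $v_c$ near the vertex so that $\min\{v_c,w_c\}=w_c$ removes the singularity, place its own free boundary $\{w_c=0\}$ precisely where it truncates the part of $\{v_c=0\}$ on which the gradient exceeds $1$, and still satisfy $|\nabla_c w_c|<1$ on that free boundary. Balancing these competing demands, and checking that one choice works uniformly for every $c$ in the stated range, is what forces the computer-assisted verification and is the step I expect to be the most delicate.
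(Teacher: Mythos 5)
Your proposal follows essentially the same route as the paper: $w_c$ is a finite sum of $c$-harmonic Legendre modes $a_n r^{\alpha_n}h_n(t)$ dominating $f_c+\epsilon g_c$ at $r=1$, condition (2) comes from the fact that the minimum of the distributionally superharmonic $v_c$ and the harmonic $w_c$ is distributionally superharmonic, and condition (3) is checked by splitting $\partial\{p_c>0\}$ into the $\{v_c=0\}$ portion (handled by the monotonicity of $G_c$ across $t_c$ underlying Theorem \ref{t:subbound}), the $\{w_c=0\}$ portion, and the corner, with the latter two verified by rigorous interval computation. The only real difference is how the coefficients are produced — you propose Legendre projection of $f_c+\epsilon g_c$ plus an upward shift, while the paper fixes them by guess-and-check (Table 2) and then verifies conditions (1) and (3) on grids with interval arithmetic and an interval Newton enclosure of the cross point — an implementation detail rather than a different argument.
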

   
   \begin{remark}
    property $(2)$ and $(3)$ will guarantee that $p_c$ is a supersolution to the free boundary problem. 
    In regards to property $(2)$: the minimum of two supersolutions is a supersolution in the distributional sense 
    (since the minimum may not be differentiable). A supersolution in the distributional sense will still satisfy the comparison principle. Indeed, a straightforward calculation shows 
    that the minimum of two classical supersolutions will satisfy the comparison principle. In regards to property $(3)$: the interface $\partial \{p_c>0\}$ may not be differentiable. However,
     property $(3)$ will be sufficient for a comparison principle to establish that $p_c$ is a supersolution to the free boundary problem. With property $(2)$ and $(3)$ one may wonder why
     property $(1)$ is necessary or emphasized. In Lemma \ref{l:above} we have the $\epsilon$ fixed and hence a particular supersolution. In order to construct the continuous family of 
     supersolutions for Theorem \ref{t:main}, we will need to utilize property $(1)$. 
   \end{remark}

  \begin{proof}
   
   To rigorously verify conditions (1) and (3) of the lemma, we use rigorous computation as outlined in the following algorithm. 
   
 \begin{enumerate}
 \item The coefficients we use to construct $w_c$ are given in Table 2, and were obtained in a guess and check manner by visualizing the conditions (1) and (3) of the lemma for specific values of $c$. We divide the interval $[0,0.43]$ into smaller intervals for which the said coefficients suffice to establish conditions (1) and (3) of the lemma. These subintervals are recorded in Table 2.
 \item We interpolate $f_c$ with rigorous error bounds for $t\in[ -0.7,1]$ and $\beta \in [1.47,2]$. The details of interpolation are similar to those described in Section \ref{s:subsolutions}.
 \item We interpolate $f_c$ with rigorous error bounds for $t\in[-.7,1]$ and $\beta \in [-1/4,2]$. Recall that $g_c(t,\beta) = f_c(t,-\beta/8)$. We then use the interpolation of $f_c$ to compute $g_c$.
 \item To verify condition (1) of the lemma, we compute $w_c$ with $r = 1$ on a 10,000 by 1,000 grid of intervals with $(t,\beta) \in[-1,1]\times [\beta_a,\beta_b]$, where $[\beta_a,\beta_b]$ corresponds to an interval $[c_a,c_b]$ listed in the last column of Table 2. Note that the code implements vectorization to reduce the amount of times the computer rounding mode must be changed. Next, we determine where $w_c>0$ and verify that condition (1) of the lemma holds whenever this is the case.
 \item Next, we use 10 iterations of Newton's method with point intervals to get a close approximation of where $w_c = f_c+\varepsilon g_c = 0$, referred to as the cross point. Next, we make an interval of radius 1e-3 centered at this approximation of the cross point and iterate the interval Newton method 10 times to obtain a rigorous enclosure of where $w_c = f_c+\varepsilon g_c = 0$. 
 \item Next, we check that condition (3) of the lemma holds on the interval enclosure of the cross point found in the previous step.
 \item Finally, if $(x_0,y_0)$ is the cross point, we compute where $w_c = 0$ on a grid of size 30 by 30 with $x\in [-1,-\sqrt{x_0^2+y_0^2}]$ and $y\in[0,y_0]$. Wherever $w_c = 0$, we verify that $\left | \nabla_c w_c\right | < 1$.
  \item Note that the code is available at \url{https://github.com/bhbarker/rigorous_computation/tree/main/minimizers_free_boundary_cone}.
  \end{enumerate}
   
    \begin{center}
    \begin{table}
    \setlength{\tabcolsep}{10pt} 
\renewcommand{\arraystretch}{1.5} 
     \begin{tabular}{|c||c|c|c|c|c|c|}
\hline Range of $c$ & $\epsilon$ & $a_0$ & $a_1$ & $a_2$ & $a_3$ & $c$-intervals   \\
\hline \hline   
$0\leq c \leq 0.3$ & $0.2$ & $0.999$ & $0.31$ & $0$ & $0$ & $\left[\frac{j}{10},\frac{j+1}{10}\right]$ for $j=0,1,2$.   \\ 
\hline
$ 0.3\leq c \leq 0.41$ & $0.1$ & $0.19$ & $1.005$ & $-0.09$ & $0.03$& $\left[\frac{30+j}{10},\frac{30+j+1}{10}\right]$ for $j=0,1,...,10$.   \\
\hline
$0.41 \leq c \leq 0.42$ & $0.1$ &    $0.193$    & $1.005$    & $-0.09$   & $0.03$&  [0.41,0.42]   \\
\hline
$0.42 \leq c \leq 0.43$ & $0.1$ &    $0.196$    & $1.005$    & $-0.09$   & $0.03$ & [0.42,0.43]   \\
\hline
\end{tabular}
\label{tb:coeffs}
\caption{We record the coefficients $\alpha_n$ used in the definition of the function $w$ and $\varepsilon$ that appears in the sum $f_c+\varepsilon g_c$ for various intervals of $c$. }
\end{table}
\end{center}

This concludes the proof.
  \end{proof}

  \begin{remark}
   The value of $c=0.43$ is not optimal using the method we have shown; however, the optimal value (with this method) is not too much higher than $0.43$. One may guess that 
   by simply using a higher order and better approximation for $f_c + \epsilon g_c$ on $\partial B_1$ this method will work for higher values of $c$. This is not the case, and most likely one cannot consider (for the role of $w_c$) the positivity set of a function that is $c$-harmonic in all of $B_1$. 
     \end{remark}

  The intent of this next Lemma is to use a continuous one-parameter family of supersolutions satisfying \eqref{e:sups} to 
  conclude that any solution to \eqref{e:phase1} lies below $\tilde{p}_c(r,\phi)=p_c(r, \cos \phi)$. The coefficients in this next Lemma are easily obtained and apply 
  to much larger intervals for values of $c$. The inequalities are are strict with large differences (often greater than $0.1$); consequently, we did not apply the computer assisted proof to this next Lemma. 
  
   \begin{lemma}  \label{l:belowp}
      Let $\tilde{p}_c(r,\phi)=p_c(r, \cos \phi)$ with $p_c(r,t)$ given as in Lemma \ref{l:above}. If $\psi$ is a solution to \eqref{e:phase1}, 
      with $\psi = \tilde{u_c}$ on $\partial B_1$, then $\psi \leq \tilde{p}_c$. 
   \end{lemma}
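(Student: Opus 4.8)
The plan is to use the family of supersolutions $\tilde p_c$ constructed in Lemma \ref{l:above} as a barrier from above, together with the comparison principle described in Section \ref{s:subsolutions}, to force any solution $\psi$ below $\tilde p_c$. The key point is that property $(1)$ of Lemma \ref{l:above} gives me a genuine one-parameter family: by scaling $\epsilon$ I obtain supersolutions $\tilde p_c^{\,\epsilon}$ (built from $w_c^{\epsilon}$ approximating $f_c + \epsilon g_c$) whose positivity sets and boundary values vary continuously and monotonically in $\epsilon$. Since $g_c > 0$ near the vertex, increasing $\epsilon$ pushes $w_c^{\epsilon}$ up and enlarges the supersolution, while as $\epsilon \to 0$ we recover $\tilde p_c \nearrow \tilde u_c$ on $\partial B_1$.

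First I would set up the boundary comparison. On $\partial B_1$ we have $\psi = \tilde u_c$, while property $(1)$ of Lemma \ref{l:above} guarantees $w_c(1,t) > f_c(t) + \epsilon g_c(t) > f_c(t) = \tilde u_c(1,\cos\phi)$ wherever $w_c(1,t)>0$, so $\tilde p_c \geq \tilde u_c = \psi$ on $\partial B_1$. Thus for $\epsilon$ fixed the supersolution dominates $\psi$ on the outer boundary. Next I would run the sliding/continuity argument in $\epsilon$ exactly dual to the subsolution argument of Section \ref{s:subsolutions}: suppose for contradiction that $\psi(x_1) > \tilde p_c(x_1)$ for some $x_1 \in B_1$. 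Choosing $\epsilon$ large makes $\tilde p_c^{\,\epsilon} \geq \psi$ throughout $\overline{B}_1$ (the supersolution is large near the vertex and dominates on the boundary). Decreasing $\epsilon$ continuously, there is a first $\epsilon_0 > 0$ and an interior touching point $x_2 \in B_1$ where $\tilde p_c^{\,\epsilon_0}(x_2) = \psi(x_2)$, with either $x_2 \in \{\psi>0\}\cap\{\tilde p_c^{\,\epsilon_0}>0\}$ or $x_2 \in \partial\{\psi>0\}\cap\partial\{\tilde p_c^{\,\epsilon_0}>0\}$. In the first case, property $(2)$ of Lemma \ref{l:above} gives $\Delta_c \tilde p_c^{\,\epsilon_0} \leq 0 \leq \Delta_c \psi$ in the common positivity set, so the strong maximum principle forbids an interior touch from above unless the functions coincide, a contradiction with the touching being the first contact. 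In the second case, property $(3)$ of Lemma \ref{l:above} gives $\lim_{\rho\to 0}\sup_{B_\rho(x_2)}|\nabla_c \tilde p_c^{\,\epsilon_0}| < 1 = |\nabla_c \psi|$ on the shared free boundary, which by the gradient condition (the dual of the subsolution comparison stated after \eqref{e:sups}) is again incompatible with $\tilde p_c^{\,\epsilon_0} \geq \psi$ touching at a free boundary point. Either way we reach a contradiction, so no such $x_1$ exists and $\psi \leq \tilde p_c^{\,\epsilon}$ for every $\epsilon>0$.

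Finally I would let $\epsilon \to 0$. Since $\tilde p_c^{\,\epsilon} \to \tilde p_c$ and the bound $\psi \leq \tilde p_c^{\,\epsilon}$ is uniform, passing to the limit yields $\psi \leq \tilde p_c$, which is the claim. The main obstacle I anticipate is the touching analysis at a free boundary point, i.e. making the second case rigorous: because $\partial\{\tilde p_c>0\}$ may fail to be differentiable (it is built as $\max\{\min\{v_c,w_c\},0\}$), one cannot simply invoke the classical Hopf lemma, and one must instead use the distributional form of the comparison principle together with the $\liminf$/$\limsup$ gradient conditions \eqref{e:subs}--\eqref{e:sups}, exactly as flagged in the remark following Lemma \ref{l:above}. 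Verifying that the strict gradient inequality $|\nabla_c \tilde p_c|<1$ persists up to the (possibly singular) touching point, and that the two ways the interfaces can meet are both excluded, is the delicate step; the rest is a routine translation of the subsolution comparison argument of Section \ref{s:subsolutions} into its supersolution dual.
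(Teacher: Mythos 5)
Your proposal hinges on sliding a family $\tilde p_c^{\,\epsilon}$ obtained ``by scaling $\epsilon$'' in Lemma \ref{l:above}, but no such family is available: Lemma \ref{l:above} produces a single supersolution for one fixed $\epsilon$ and one fixed set of coefficients $a_n$ (computer-verified for the specific values in Table 2), and its conditions (1) and (3) are tied to that $\epsilon$. More fatally, even if you grant yourself the family $p_c^{\,\epsilon}=\max\{\min\{v_{c,\epsilon},w_c\},0\}$ for every $\epsilon>0$ (with $w_c$ fixed), your slide cannot be initialized. Every member satisfies $\{p_c^{\,\epsilon}>0\}\subset\{w_c>0\}$, and $\{w_c>0\}$ is a fixed proper subset of $B_1$ (the function $w_c$ vanishes inside $B_1$; that is where the free boundary of $p_c$ lives), while $p_c^{\,\epsilon}$ is capped above by the fixed bounded function $w_c$. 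Since nothing confines $\operatorname{supp}\psi$ to $\{w_c>0\}$ a priori, there is no $\epsilon$ for which $\tilde p_c^{\,\epsilon}\geq\psi$ on $\overline B_1$: ``large near the vertex and dominating on $\partial B_1$'' does not give domination at points where $\psi$ might be positive but $w_c\leq 0$. Establishing exactly this initial ordering / support confinement is the whole point of Lemma \ref{l:belowp}, so your argument assumes what it must prove. (Your final step is also misdirected: $\tilde p_c$ \emph{is} the fixed-$\epsilon$ supersolution, so no limit $\epsilon\to 0$ is called for; the family decreasing to $\tilde u_c$ is constructed later, in the proof of Theorem \ref{t:main}, by rescaling, and it takes this lemma as input.)

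The paper supplies the missing ingredient with a genuinely different, auxiliary family: explicit functions such as $q_s(r,t)=\max\{0,\,s\,a_0+0.8\,a_1 r^{\alpha_1}h_1(t)\}$ (coefficients chosen per range of $c$), which are $c$-harmonic where positive and satisfy $|\nabla_c q_s|<1$ on all of $B_1$, uniformly in $s$ --- a bound checkable by hand, which is why no computer assistance is needed here. For $s$ large, $q_s$ is positive and arbitrarily large on all of $\overline B_1$, so $q_s\geq\psi$ holds trivially; sliding $s$ downward, the free-boundary comparison principle (interior touching excluded by the maximum principle, free-boundary touching excluded since $|\nabla_c q_s|<1=|\nabla_c\psi|$ on $\partial\{\psi>0\}$) keeps $\psi$ strictly below, until at the terminal parameter one has $\operatorname{supp}\psi\subset\operatorname{supp} q_{0.22}\subset\operatorname{supp} p_c$ and $\psi\leq\tilde q_{0.22}\leq\tilde p_c$ on $\partial B_1$. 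Only then, with supports nested and boundary values ordered, does the ordinary comparison principle for $\Delta_c$ on $\{\psi>0\}$ (where $\psi$ is $c$-harmonic and $\tilde p_c$ is distributionally superharmonic) deliver $\psi\leq\tilde p_c$. So the delicate step is not the touching analysis you flagged --- that part of your proposal is standard and fine --- but the fact that a barrier family must have supports that sweep out all of $B_1$, which your family structurally cannot do.
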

   
   \begin{proof} 
    To simplify the presentation we handle the different ranges of $c$ separately. For each fixed range, only the powers $\alpha_n$ will vary with $c$. The
    coefficients $a_n$ are fixed for the particular range of $c$. We first assume $0 \leq c \leq 0.3$. If
    \[
     q_s(r,t):= \max\{0,s a_0 + 0.8 a_1 r^{\alpha_1} h_1(t)\},
    \]
    then we compute (which in this instance can be done by hand) that $|\nabla_c q_s| < 1$ on $B_1$. Furthermore, for $s= 0.22$ we have that 
    $\text{supp}\  q_{0.22}(r,t) \subset \text{supp} \ p_c(r,t)$. Also, $u_c(1,t) \leq q_{0.22}(1,t)$. Using the comparison principle, and the one-parameter family $\tilde{q}_s$ of supersolutions
    satisfying \eqref{e:sups}, 
    it follows that if 
    $\psi$ satisfies \eqref{e:phase1} and $\psi = \tilde{u}_c$ on $\partial B_1$, then $\text{supp} \ \psi \subset \text{supp} \ \tilde{p}_c$ with $\psi \leq \tilde{q}_{0.22} \leq \tilde{p}_c$ on $\partial B_1$. It follows by the comparison principle for $\Delta_c$, that $\psi \leq \tilde{p}_c$ on $B_1$.  
    
    We now consider the range $0.3 \leq c \leq 0.4$, and 
    \[
     q_{s}(r,t):= \max\{0, s_1 a_0 + s_2 \sum_{n=1}^3 a_n h_n(t)\}. 
    \]
    with 
    \[
     s_1(s)=
      \begin{cases}
       s-1 &\text{ for } 4 \leq s \\
       3 & \text{ for } 3\leq s \leq 4 \\
       1.5s-1.5 &\text{ for } 2 \leq s \leq 3 \\
       1.5 &\text{ for } 1 \leq s \leq 2\\
       0.65s+0.85 & \text{ for } 0\leq s \leq 1 
      \end{cases}
    \qquad 
     s_s(s)=
      \begin{cases}
       0.7 &\text{ for } s \leq 4 \\
       -0.1s+1.1 & \text{ for } 3\leq s \leq 4 \\
       0.8 &\text{ for } 2 \leq s \leq 3 \\
       -.15s+1.1 &\text{ for } 1 \leq s \leq 2\\
       0.95 & \text{ for } 0\leq s \leq 1 
      \end{cases}
    \]
    The values of $s_1, s_2$ are chosen for simplicity in computation rather than for presentation. For
     $s=0$, we have that $q_0 \leq p_c$ and also $q_0(1,t) \geq u_c(1,t)$. It then follows as before that for any $\Phi$ a solution to \eqref{e:phase1} with 
     $\Phi=\tilde{u}_c$ on $\partial B_1$ that $\Phi \leq \tilde{p}_c$. 
         
   \end{proof}

    \begin{theorem} \label{t:main}
      Let $0 \leq c \leq 0.43$. Then $\tilde{u}_c$ is a unqiue minimizer of the functional $J_{c}$ subject to Dirichlet boundary conditions, i.e. if $\psi - u \in W_0^{1,2}(B_1)$, 
      then 
      \[
       J(u_c) \leq J(\psi), 
      \]
      and consequently the free boundary of a minimizer of $J_c$ can pass through the vertex of the cone. 
    \end{theorem}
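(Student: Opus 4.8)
The plan is to promote the uniqueness sketch of Section~\ref{s:subsolutions} into a genuine two-sided comparison and then read off minimality. First I would record, from \cite{a17}, that a minimizer of $J_c$ with boundary data $\tilde{u}_c$ on $\partial B_1$ exists, that every minimizer is a solution of \eqref{e:phase1}, and that showing the free boundary can reach the vertex is equivalent to showing $\tilde{u}_c$ is a minimizer. Hence it suffices to prove that $\tilde{u}_c$ is the \emph{unique} solution of \eqref{e:phase1} carrying its own boundary values, for then any minimizer must coincide with $\tilde{u}_c$. Fix such a solution $\psi$.

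For the bound from below I would use the subsolutions $\tilde{v}_\epsilon = \max\{r\tilde{f}_c(\phi) - \epsilon r^{-1/2}\tilde{g}_c(\phi),0\}$. Theorem~\ref{t:subbound} gives $G_c'(t) > 0$ on $(t_c,1)$, which with $G_c(t_c) = 1$ from \eqref{e:subderiv} yields $G_c(t) > 1$ for $t > t_c$, i.e.\ $|\nabla_c \tilde{v}_\epsilon| > 1$ on $\partial\{\tilde{v}_\epsilon > 0\}\setminus\{0\}$; together with $\Delta_c \tilde{v}_\epsilon = 0$ this makes each $\tilde{v}_\epsilon$ a strict subsolution in the sense of \eqref{e:subs}, and $\tilde{v}_\epsilon \nearrow \tilde{u}_c$ as $\epsilon \to 0$. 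If $\psi$ were strictly below $\tilde{u}_c$ somewhere, I would begin with $\epsilon$ large (so that $\tilde{v}_\epsilon \le \psi$ on $\overline{B}_1$) and decrease $\epsilon$ to a first value $\epsilon_0 > 0$ at which $\tilde{v}_{\epsilon_0}$ touches $\psi$, either at an interior point of $\{\psi > 0\} \cap \{\tilde{v}_{\epsilon_0} > 0\}$ or at a common free-boundary point. Both are excluded by the comparison principle recalled in Section~\ref{s:subsolutions}, so $\psi \ge \tilde{u}_c$ throughout $B_1$.

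For the bound from above I would run the mirror argument with the supersolutions of Section~\ref{s:supersolution}. By Lemma~\ref{l:above} the function $\tilde{p}_c = \max\{\min\{v_c,w_c\},0\}$, with $v_c = r\tilde{f}_c + \epsilon r^{-1/2}\tilde{g}_c$ and $w_c$ the $c$-harmonic Legendre cut-off, is a distributional supersolution satisfying properties (2) and (3); the Legendre piece $w_c$ is inserted precisely to excise the $r^{-1/2}$ singularity of $v_c$ at the vertex, so that $\tilde{p}_c$ stays positive near the origin even though $\tilde{v}_\epsilon$ vanishes there. Lemma~\ref{l:belowp} places any solution with the prescribed data below $\tilde{p}_c$. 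To sharpen this to $\psi \le \tilde{u}_c$ I would use the strict inequality of property (1) as the room needed for a continuous family of such supersolutions, sliding to first contact exactly as in the lower-barrier step. Since away from the vertex $v_c \to \tilde{u}_c$ while the radius below which $\min\{v_c,w_c\} = w_c$ shrinks to $0$ as $\epsilon \to 0$, the family decreases to $\tilde{u}_c$ on every shell $\{r \ge \delta\}$; letting $\delta \to 0$ then gives $\psi \le \tilde{u}_c$ on $B_1 \setminus \{0\}$.

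Combining the two inequalities gives $\psi = \tilde{u}_c$ on $B_1 \setminus \{0\}$, and continuity of $\psi$ forces equality at the vertex, so $\tilde{u}_c$ is the unique solution of \eqref{e:phase1} with its own boundary data. Since a minimizer of $J_c$ exists and is a solution with exactly this data, it must equal $\tilde{u}_c$; this yields $J_c(\tilde{u}_c) \le J_c(\psi)$ for every admissible competitor, hence uniqueness of the minimizer, and because $0 \in \partial\{\tilde{u}_c > 0\}$ the free boundary of the minimizer passes through the vertex. I expect the real difficulty to be concentrated at the vertex: the supersolution cannot be pinched down to $\tilde{u}_c$ there, since it must remain positive to absorb the singularity, so the upper-barrier step has to be performed on $B_1 \setminus \{0\}$ and then closed by continuity; the one genuinely hard analytic input, that the glued function $\min\{v_c,w_c\}$ simultaneously removes the singularity and keeps $|\nabla_c \tilde{p}_c| < 1$ across the new free boundary, is already isolated in Lemma~\ref{l:above}.
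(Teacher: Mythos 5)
Your overall skeleton is the paper's: reduce minimality to uniqueness of solutions of \eqref{e:phase1} with boundary data $\tilde{u}_c$, get $\psi \geq \tilde{u}_c$ by sliding the subsolutions $\tilde{v}_\epsilon$ (Theorem \ref{t:subbound} plus \eqref{e:subderiv}), and get $\psi \leq \tilde{u}_c$ by sliding supersolutions built from Lemmas \ref{l:above} and \ref{l:belowp}. The lower-bound half and the final reduction to minimality are correct. The gap is in the upper-bound half: your decreasing family of supersolutions is obtained by ``letting $\epsilon \to 0$'' in Lemma \ref{l:above}. But Lemma \ref{l:above} is a computer-assisted statement for a \emph{single fixed} $\epsilon$ (in fact $\epsilon = 0.2$ or $0.1$, with the specific coefficients of Table 2); it asserts the existence of one supersolution $\tilde{p}_c$, not of supersolutions for every small $\epsilon$, and a rigorous computation cannot be run for infinitely many parameter values. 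Your sentence ``the radius below which $\min\{v_c,w_c\} = w_c$ shrinks to $0$ as $\epsilon \to 0$'' presupposes exactly the family whose existence is at issue, so as written the argument is incomplete precisely at its crucial step. This is the issue flagged in the Remark following Lemma \ref{l:above}.

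The paper closes this gap with a scaling-and-gluing construction that your proposal is missing. The free boundary problem is invariant under $u \mapsto u(\rho\,\cdot)/\rho$, and the explicit barriers transform into one another:
\[
 \frac{\tilde{v}_{c,\tau}(\rho r,\phi)}{\rho} \;=\; r\tilde{f}_c(\phi) + \tau\rho^{-3/2} r^{-1/2}\tilde{g}_c(\phi) \;=\; \tilde{v}_{c,\epsilon}(r,\phi)
 \qquad \text{whenever } \rho^{3/2} = \tau/\epsilon .
\]
So for each $0<\tau\leq\epsilon$ one defines $\tilde{p}_{c,\tau}$ to be the rescaled copy $\rho\,\tilde{p}_c(\cdot/\rho,\phi)$ of the \emph{one} supersolution of Lemma \ref{l:above} inside $B_\rho$, glued to $\tilde{v}_{c,\tau}$ outside $B_\rho$. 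Property (1) of Lemma \ref{l:above} is exactly what makes the glued function a supersolution across the gluing sphere: near $\partial B_1$ (in rescaled coordinates) one has $w_c > v_{c,\epsilon}$ wherever both are positive, so $\tilde p_c$ coincides with $\tilde v_{c,\epsilon}$ there and the two pieces match smoothly. The free boundary of the outer piece sits at angles between $t_\epsilon$ and $t_c$, where the monotonicity of $G_c$ from Theorem \ref{t:subbound} and $G_c(t_c)=1$ give $|\nabla_c \tilde{v}_{c,\tau}|<1$. As $\tau \to 0$ one has $\rho \to 0$ and $\tilde{p}_{c,\tau} \searrow \tilde{u}_c$, which is the continuous family your sliding argument needs (starting from $\psi \leq \tilde p_{c,\epsilon}=\tilde p_c$, supplied by Lemma \ref{l:belowp}; note also that the vertex never becomes a touching point, since each $\tilde{p}_{c,\tau}$ is strictly positive there). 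Your instinct that property (1) provides ``the room'' for the family was correct, but the mechanism is rescaling a single supersolution, not re-proving Lemma \ref{l:above} along a sequence $\epsilon \to 0$; without this construction the family, and hence the inequality $\psi \le \tilde u_c$, is not established.
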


    \begin{proof}
     We fix $c \leq 0.43$. From Lemma \ref{l:belowp} we have that any solution $\psi$ to \eqref{e:phase1} with $\psi - u \in W_0^{1,2}(B_1)$ one has that 
     $\psi \leq \tilde{p}_c$. 
     Following the ideas in \cite{a17}, we now construct a continuous one-parameter family of 
     supersolutions $\tilde{p}_{c,\tau}$ such that $\tilde{p}_{c,\tau} \searrow \tilde{u}_c$. From the strict comparison principle, it then follows that any solution to 
     \eqref{e:phase1} lies below $\tilde{u}_c$. From Section \ref{s:subsolutions} we also have that any solution to \eqref{e:phase1} lies above $\tilde{u}_c$. It then 
     follows that $\tilde{u}_c$ is a minimizer and is also a unique minimizer. We now proceed with the construction of the one-parameter family of supersolutions.


     Let $t_{\epsilon}$ be the value such that $v_c(1,t_\epsilon)=0$.  We consider $\tilde{v}_{c,\tau}:=r\tilde{f}_c(\phi)+\tau r^{-1/2}\tilde{g}_c(\phi)$ with $0<\tau \leq \epsilon $ with $\epsilon$ given in Lemma \ref{l:above}. 
     We let $\rho:=(\tau/\epsilon)^{3/2}$. We will redefine $\tilde{v}_{c,\tau}$ on the ball $B_{\rho}$. In order to motivate this we consider the scaling 
     \[
     \tilde{v}^{\rho}_{c,\tau}(r,\phi):= \frac{v_{c,\tau}(\rho r,\phi)}{\rho}
     \]
     which will take 
     $B_1 \to B_{1/\rho}$.      
     Now under this scaling we obtain 
     \[
      \tilde{v}^{\rho}_{c,\tau}(r,\phi)=[r\tilde{f}_c(\phi)+\tau \rho^{-3/2}r^{-1/2}\tilde{g}(\phi)]= \tilde{v}_{c,\epsilon}(r,\phi).
     \] 
     Outside of $B_1$ we will have that the rescaled function $\tilde{v}^{\rho}_{c,\tau}$ is a supersolution to the free boundary since if $v^{\rho}_{c,\tau}(r_0,t_0)=0$ with $r \geq 1$, then 
     $t_0 \geq t_{\epsilon}$, so that if $t_0 =\cos \phi_0$, then $|\nabla_c \tilde{v}^{\rho}_{c,\tau}(r_o,\phi_0)|<1$ (this follows from Theorem \ref{t:subbound}).  We now let 
     \[
     \overline{p}_{c,\tau}(r,\phi):=
     \begin{cases}
      \tilde{v}_{c,\epsilon}(r,\phi) \text{ if } r\geq 1 \\
      \tilde{p}(r,\phi) \text{ if } r<1. 
     \end{cases}
     \] 
     From property $(1)$ in Lemma \ref{l:above}, we have that $\overline{p}_{c,\tau}(r,\phi)=v_{c,\epsilon}(r,\phi)$ in a neighborhood of $(1,\phi)$ whenever \
     $\tilde{v}_{c,\epsilon}(1,\phi)>0$. Then 
     $\overline{p}_{c,\tau}$ is a supersolution to the free boundary problem in $B_{1/\rho}$ even across $\partial B_1$. Rescaling back to $B_1$ by defining 
     \[
      \tilde{p}_{c,\tau}(r,\phi):=\rho\overline{p}_{c,\tau}(r/\rho,\phi)
     \]
     we have a continuous family of supersolutions $p_{c,\tau} \searrow v_c$. This concludes the proof. 
    \end{proof}

\appendix

\section{Computational environment} Rigorous computations were run on a MacBook Pro (Retina, 15-inch, Mid 2015) with a 2.8 GHz Quad-Core Intel Core i7 and 16 GB 1600 MHz DDR3, running version 10.15.7 of Catalina. The computational environment consisted of MatLab 2019b and INTLAB version 11. All code can be obtained at \url{https://github.com/bhbarker/rigorous_computation/tree/main/minimizers_free_boundary_cone}.

\bibliographystyle{amsplain}
\bibliography{refccone} 

\end{document}